\newtheorem{corollary}{Corollary}[section]
\newtheorem{lemma}[corollary]{Lemma}
\newtheorem{proposition}[corollary]{Proposition}
\newtheorem{theorem}[corollary]{Theorem}
\newcommand{\Prob} {{\mathbb P}}
\newcommand{\E}{{\mathbb E}}
\newcommand{\R}{{\mathbb{R}} }
\newcommand{\dist}{{\rm dist}}
\def\R{\mathbb{R}}
\def\d{\delta}
\def\e{\epsilon}
\def\k{\kappa}
\def\ra{\rightarrow}
\def \Im {{\rm Im}}
\def \Re {{\rm Re}}
\def \p {\partial}
\def \Half {{\mathbb H}}
\def \Disk {{\mathbb D}}
\def \diam {{\rm diam}}
\newcommand {\exc} {{\mathcal E}}
\def  \newernot{\Phi}
\begin{document}

\title{Up-to-constants bounds on the two-point Green's function for SLE curves}

\author{Gregory F. Lawler \thanks{Research supported by National
Science Foundation grant DMS-0907143.}\\
University of Chicago  \\ \\ \\
Mohammad A. Rezaei\\ Michigan State University}

\maketitle

\begin{abstract}
The Green's function for the chordal Schramm-Loewner evolution $SLE_\kappa$
for $0 < \kappa < 8$,  gives the normalized
probability of getting near points.  We give
  up-to-constant bounds for the two-point Green's function.

\end{abstract}

\section{Introduction}

The Schramm-Loewner evolution ($SLE_\kappa$)
 is a conformally invariant family of probability
measures on curves originally given by Schramm  as a candidate
for the scaling limit of lattice models in statistical
physics.  The chordal Green's function gives the 
normalized probability that the path goes through a point
and the two-point Green's functions gives the correlations
for this quantity.  While the one-point function
is known (up to an arbitrary multiplicative constant in
the definition), and the existence of the two-point
function has been established, the exact form of the two-point
function is not known.  Estimates for the two-point
function have proved to be important in analyzing fractal
properties of the $SLE$ curves, in particular the Hausdorff
dimension and the Minkowski content.  The goal of
this paper is to 
  give up-to-constant bounds valid for all pairs of points
in a domain.  It is still open to give a closed form
for the function.\\

We start by reviewing the definition of $SLE$ and giving
the relevant known results.
See \cite{Law1} for more details. 
Suppose  that $\gamma:(0,\infty) \rightarrow
\mathbb{H} =\{x+iy: y > 0\}$
is a  curve with $\gamma(0+) \in
\mathbb{R}$ and $\gamma(t) \rightarrow \infty$ as $t \rightarrow
\infty$.  Let $H_t$ be the unbounded component of
$\mathbb{H} \setminus \gamma(0,t]$.   Using the Riemann mapping
theorem, one    can see that there is a unique conformal
transformation
$          g_t: H_t \longrightarrow \mathbb{H} $
satisfying $g_t(z) - z \rightarrow 0$ as $z \rightarrow \infty$.
For any $a > 0$, it can be parametrized so that as $z \rightarrow \infty$,
\[           g_t(z) = z + \frac{a t}{z} + O(|z|^{-2}). \]
The conformal maps $g_t$ satisfy the chordal Loewner equation
\begin{equation}  \label{loew}
       \dot g_t(z) = \frac{a}{g_t(z) - U_t} , \;\;\;\;
   g_0(z) = z,
\end{equation}
where $U_t = g_t(\gamma(t))$ is a continuous real-valued
function. The {\em Schramm-Loewner
evolution} ($SLE_\kappa$) is obtained by choosing $a = 2/\kappa$ and
$U_t $ to be a standard (one-dimensional) Brownian motion. 
In this paper we will consider only $0 < \k < 8$ and let $a = 2/\kappa
> 1/4$.  We write
\[   Z_t(z) = g_t(z) - U_t . \]

For $z
\in \mathbb{H} \setminus \{0\}$, the function $t \mapsto g_t(z)$ is
well
defined up to time $T_z := \sup\{t: {\rm Im}[g_t(z)]> 0\}$.
Rohde and Schramm \cite{RS} showed
that  for $\k < 8$ the Loewner equation above
generates   a random   curve $\gamma$, which
is also called $SLE_\kappa$, and they showed in a weak sense
that the dimension of the path is 
\begin{equation}
\label{dimmy}
     d= 1 + \frac \kappa 8 .
     \end{equation}
If $\kappa \geq  8$, the curve exists but is plane filling and is not
relevant for this paper.
 If $0 < \kappa \leq 4$, the paths are simple with
$\gamma(0,\infty) \subset \Half$  while
there are double points and $\gamma(0,\infty)
\cap \R \neq \emptyset$   for $4 < \kappa < 8$.
Moreover, if $H_t$ denotes the unbounded component
of $\mathbb{H} \setminus \gamma(0,t]$, then
\[   H_t = \{z \in \mathbb{H}: T_z > t\}. \]

Their starting point to compute \eqref{dimmy}
 was to assume that
there exists a function $G$ and a constant $\hat c$ such that
\begin{equation}  \label{onedef}
 \lim_{\e \ra 0} \e^{d-2}\, \Prob\{\dist(z,\gamma)<\e\}   = \hat c \, G(z)
, \end{equation}
 where $\gamma = \gamma(0,\infty)$. 
Although  did not establish the limit, they did note that
if  such a function exists, then the conformal Markov property of
$SLE_\kappa$ implies that
\begin{equation}  \label{localmart}
    M_t(z) = |g_t'(z)|^{2-d} \, G(Z_t(z)),
\end{equation}
must be a local martingale.  From this one can determine the
only possible value of $d$ is that given in \eqref{dimmy},
 and the function $G$ must
be a multiple of
\begin{equation}  \label{greeny}
   G(z) = \Im(z)^{d-2} \, [\sin \arg(z)]^{4a-1} .
   \end{equation}
We call $G$ (with this choice of constant) The $SLE_\kappa$
Green's function. \\

In \cite{Bf} it was proved that the Hausdorff dimension
of the path is indeed $d$, and in \cite{LR1} it was established
that the $d$-dimensional Minkowski content of $\gamma[0,t]$
 is finite and nonzero.
  In \cite{Law2},
the limit 
was shown to exist  if we replace distance with the conformal
radius of $z$ in the domain $\Half \setminus \gamma$.
More recently, \cite{LR1} established the existence
of the limit as given although the value of the constant
$\hat c$ is unknown. \\
 


The two-point Green's function is defined by
\begin{equation}  \label{twodef}
\lim_{\e \ra 0, \d \ra 0}  \e^{(d-2)} \; \delta^{(d-2)} \, \Prob\{\dist(z,\gamma) <\e,\dist(w,\gamma) < \e\}
= \hat c^2 \, G(z,w).
\end{equation}
The existence of the limit with conformal radius replacing distance was 
established in \cite{LW} and the limit with distance was proved
in \cite{LR1}.    As shown in \cite{LR1},
if $\Theta(Y)$ denotes the $d$-dimensional Minkowski content
of $V \cap \gamma$, then
\[   \E[\Theta(V)] = \hat c \int_V G(z) \, dA(z) , \;\;\;\;
   \E[\Theta(V)^2] = \hat c^2\int_V \int_V G(z,w) \, dA(z)\, dA(w) , \]
   respectively. 
   Unlike the one-point case, no exact expression
has been given for $G(z,w)$.  The goal of this
paper is   we give up-to-constants
functions by proving the following theorem. 

\begin{theorem} \label{sept8.theorem}
 There exist $0 < c_1 < c_2 < \infty$ such that
  if  $z,w \in \Half$
  with $|z| \leq |w|$,
  then
 \[  c_1 \, q^{d-2}
        \, [S(w) \vee q]^{-\beta} \leq
         \frac{G(z,w)}{G(z)\,  G(w)}   \leq c_2 \,   q^{d-2}
        \, [S(w) \vee q]^{-\beta} , \]
        where
        \[  S(w) = \sin[\arg(w)] , \;\;\;\;
        q = \frac{|w-z|}{|w|} \leq 2 , \;\;\;\; \beta=   \frac{\kappa} 8 +
        \frac 8 \kappa - 2 > 0 . \]
 \end{theorem}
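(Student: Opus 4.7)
By the scale invariance of the Green's functions we may assume $|w|=1$, so $|z|\le 1$ and $q=|w-z|\le 2$; in particular $G(w)=S(w)^{\beta}$ with $\beta=(d-2)+(4a-1)$. We decompose $G(z,w)=G^{<}(z,w)+G^{>}(z,w)$ according to whether the SLE curve first enters a small disk around $z$ or around $w$ in the limit defining \eqref{twodef}. The plan is to estimate $G^{<}$ directly; the contribution $G^{>}$ will be handled by the same technique with $z$ and $w$ swapped, and the hypothesis $|z|\le|w|$ will be used only at the very end.

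\emph{Markov-property formula.} Applying the domain Markov property of $SLE_\kappa$ at the first entry time $\tau=\tau_z^{\epsilon}$ into $B(z,\epsilon)$, the conformal covariance of the one-point Green's function on the slit domain $H_\tau$ (from \cite{LR1}), and passage to the limit in \eqref{twodef}, we will obtain
$$ G^{<}(z,w) \;=\; G(z)\,\E^{*}_{z}\!\left[\,|g'_{T_z}(w)|^{2-d}\,G\bigl(Z_{T_z}(w)\bigr)\,\right], $$
where $\E^{*}_z$ is the expectation under the two-sided radial $SLE_\kappa$ at $z$ and $T_z$ is its hitting time of $z$. Using the Koebe-type estimate $|g'_{T_z}(w)|\asymp \Im Z_{T_z}(w)/\dist(w,\p H_{T_z})$ together with the form \eqref{greeny} of $G$, the integrand rewrites (up to multiplicative constants) as
$$ |g'_{T_z}(w)|^{2-d}\,G(Z_{T_z}(w)) \;\asymp\; \dist(w,\p H_{T_z})^{d-2}\,[\sin\arg Z_{T_z}(w)]^{4a-1}. $$

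\emph{Distance and angular factors.} With high $\E^{*}_z$-probability, the nearest point of $\p H_{T_z}$ to $w$ is either $z$ itself (distance $q$) or the real line (distance $S(w)$), so $\dist(w,\p H_{T_z})\asymp\min(q,S(w))$. Plugging this in and comparing with the target $q^{d-2}[S(w)\vee q]^{-\beta}G(w)=q^{d-2}S(w)^{\beta}[S(w)\vee q]^{-\beta}$ reduces the theorem to the angular estimate
$$ \E^{*}_z\!\left[(\sin\arg Z_{T_z}(w))^{4a-1}\right] \;\asymp\; \min\!\bigl(1,\,S(w)/q\bigr)^{4a-1}. $$
We would prove this in two regimes: (i) when $q\le S(w)$, both $w$ and the tip $\gamma(T_z)\approx z$ lie at heights $\gtrsim S(w)$, and $g_{T_z}$ sends a neighborhood of $w$ onto a region of $\Half$ on which the argument stays bounded away from $0$ and $\pi$, giving $\E^{*}_z[\,\cdot\,]\asymp 1$; (ii) when $q\ge S(w)$, the image angle is determined (after Koebe) by $\arg(w-z)$ together with the random rotation $\arg g'_{T_z}(z)$, whose law under $\E^{*}_z$ arises from the Doob transform of the driving SDE by the local martingale \eqref{localmart}, and integration yields $\asymp (S(w)/q)^{4a-1}$.

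\emph{Main obstacle and reverse ordering.} The technical heart of the argument is the two-sided angular estimate in the boundary regime $q\ge S(w)$: the upper bound requires integrability of the tip-angle density against $(\sin\cdot)^{4a-1}$ uniformly in $z$, and the lower bound needs an explicit positive-probability event on which $\sin\arg Z_{T_z}(w)\gtrsim S(w)/q$. Finally, $G^{>}(z,w)$ will be estimated by the same decomposition with $z$ and $w$ swapped, yielding a bound in terms of $q'=|w-z|/|z|\ge q$ and $S(z)$ in place of $q$ and $S(w)$; since $d-2<0$ we have $(q')^{d-2}\le q^{d-2}$, and a short case analysis on $S(z)$ vs.\ $S(w)$ will confirm that this swapped bound is at most a constant multiple of the claimed one, completing the proof.
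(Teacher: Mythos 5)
Your plan is genuinely different from the paper's. You decompose $G(z,w)$ by which of $z,w$ the curve reaches first and represent each half via two-sided radial $SLE_\kappa$ conditioned through the nearer point, then try to estimate the remaining expectation $\E^*_z\bigl[\dist(w,\p H_{T_z})^{d-2}\,[\sin\arg Z_{T_z}(w)]^{4a-1}\bigr]$ directly. The paper instead stops the curve at the finite time $\sigma=\inf\{t:|\gamma(t)-w|=2q\}$, \emph{before} it reaches either marked point, renormalizes with a conformal map $h$ so that the images $\hat z,\hat w$ are at mutual distance $\asymp 1$, and then applies two already-established ingredients: the up-to-constants probability $\Prob\{\sigma<\infty\}\asymp (S(w)\vee q)^{(4a-1)-(2-d)}q^{2-d}$ from Lemma~\ref{oct23}, and the well-separated two-point bound $G(\hat z,\hat w)\asymp G(\hat z)G(\hat w)$ from Propositions~\ref{oct23.prop1} and~\ref{sept9.prop1}. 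The latter is where the hard work lives, and it is proven beforehand by the crosscut/excursion-measure argument of Proposition~\ref{nov17.prop1}. Your decomposition by first-hit point is closer in spirit to the construction of the two-point Green's function in \cite{LW}.

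The concrete gap is in the step that asserts $\dist(w,\p H_{T_z})\asymp\min(q,S(w))$ with high probability and then factors off the angular estimate as a separate object. The two factors do not decouple. Under $\E^*_z$ the curve can come much closer to $w$ than $\min(q,S(w))$ before hitting $z$, and since $d-2<0$ the factor $\dist(w,\p H_{T_z})^{d-2}$ is heavy-tailed; on its own, $\Prob^*_z\{\dist(w,\p H_{T_z})\le\delta\}$ decays only like $\delta^{2-d}$, which makes $\E^*_z[\dist(w,\p H_{T_z})^{d-2}]$ logarithmically divergent. What rescues the expectation is precisely the fact that on the event that the curve has burrowed within $\delta\ll\min(q,S(w))$ of $w$ before hitting $z$, the point $w$ sits deep in a fjord and $\sin\arg Z_{T_z}(w)$ is forced to be very small; bounding this quantitatively, uniformly over the ways the curve can approach and recede, is exactly the content of the excursion-measure estimate in Proposition~\ref{nov17.prop1} (and the Beurling argument in the proof of \eqref{sept8.2}--\eqref{sept5.3}) that the paper develops separately. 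Your ``main obstacle'' paragraph identifies the angular estimate as the crux, but it treats it as a one-marginal statement about $\arg Z_{T_z}(w)$, whereas what must be proven is a joint control of $\dist(w,\p H_{T_z})$ and $\arg Z_{T_z}(w)$ on the rare events that drive the divergence. Without that joint estimate the upper bound does not follow. Your reverse-ordering comparison (using $q'\ge q$, $d-2<0$, and the geometric fact that $S(w)>q$ forces $S(z)\gtrsim S(w)$) is correct as stated.
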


Two important estimates exist in the literature now.  In \cite{LW},
and implicitly in \cite{Bf} although it was not phrased in this way,
it was shown that if $V$ is a bounded domain in $\Half$ bounded
away from the real line, then
\[          G(z,w)   \asymp_V    |z-w|^{d-2} , \]
where $\asymp_V$ indicates that the implicit constant depends
on $V$.  In \cite{LZ}, it was shown that there exists $c$ such
that for all $z,w$,
\[          G(z,w) \geq c \, G(z) \, G(w) . \]

While we have defined the Green's function in terms of $SLE$ in $\Half$, it
can easily be extended to simply connected domains $D$.
 To be more precise, suppose that $D$ is a simply connected
domain and $w_1,w_2$ are distinct points in $\partial D$.  Let $F: \mathbb{H}
\rightarrow D$ be a conformal transformation of $\mathbb{H}$ onto $D$ with
$F(0) = w_1, F(\infty) = w_2$.  Then the distribution of
\[        \tilde \gamma(t) = F \circ \gamma(t) , \]
is that of $SLE_\kappa$ in $D$ from $w_1$ to $w_2$.  Although
the map $F$ is
not unique, the scaling invariance of $SLE_\kappa$ in $\mathbb{H}$ shows that the distribution is independent of the choice. 
The Green's functions $G_D(F(z);w_1,w_2), G_D(F(z),F(w);w_1,w_2)$ can be defined by conformal
covariance,
\[  G(z) = |F'(z)|^{2-d} \, G_D(F(z);w_1,w_2) , \]\[
  G(z,w) = |F'(z)|^{2-d} \, |F'(w)|^{2-d} \, G_D(F(z),F(w),w_1,w_2), \]
and the corresponding limits \eqref{onedef} and \eqref{twodef} hold.  We can
write
\[     G_D(F(z);w_1,w_2) = \Upsilon_D(F(z))^{d-2} \, S_D(F(z);w_1,w_2)^{4a-1}, \]
Here $\Upsilon_D(F(z)) = |F'(z)|^{-1}/2$ denotes $(1/2)$  times the conformal radius of
$D$ with respect to $F(z)$ and $S_D(F(z);w_1,w_2) = \sin \arg[z].$
If $\partial_1,\partial_2$ denote the two components of $\partial D \setminus
\{w_1,w_2\}$, then
\begin{equation}  \label{hmeasure}
    S_D(F(z);w_1,w_2)  \asymp \min\left\{{\rm hm}_{D}(F(z),\partial_1),
{\rm hm}_D(F(z),\partial_2) \right\}.
\end{equation}
Here, and throughout this paper, $\rm{hm}$ will denote harmonic measure; that is,
 ${\rm hm}_D(z,K)$ is the probability that a Brownian
motion starting at $z$ exits $D$ at $K$.\\

%
%
Using the Schwarz lemma and the Koebe $(1/4)$-theorem, we see that
\begin{equation} \label{Koebe}
 \frac{\Upsilon_D(z)}{2} \leq \dist(z,\partial D) \leq 2 \, \Upsilon_D(z) . 
\end{equation}

If $\gamma(t)$ is an $SLE_\kappa$ curve with transformations $g_t$
and driving function $U_t$, we write $\gamma_t = \gamma(0,t], \gamma
= \gamma_\infty$.  If $z \in \mathbb{H}$ and $t < T_z$, we let
\begin{equation}  \label{may24.1}
Z_t(z) = g_t(z) - U_t, \;\;\;\; S_t(z) = \sin \left[\arg Z_t(z)\right], \;\;\;\;
  \Upsilon_t(z) = \frac{{\rm Im}[g_t(z)]}{|g_t'(z)|}.
  \end{equation}
%
It is easy to check that if $t < T_z$, then $\Upsilon_t(z)$ as given
in \eqref{may24.1} is the same as $\Upsilon_{H_t}(z)$.  Also,
if $z \not\in \gamma$, then
  $\Upsilon(z) := \Upsilon_{T_z-}(z) = \Upsilon_D(z)$
where $D$ denotes the connected component of $\mathbb{H} \setminus
\gamma$ containing $z$.
Similarly, if $w_1,w_2$ are distinct boundary points on a simply
connected domain $D$
and $z \in D$, we define
\[            S_D(z;w_1,w_2) = \sin[\arg f(z)] , \]
where $f: D \rightarrow \mathbb{H}$ is a conformal transformation with $f(w_1) = 0,
f(w_2) = \infty$.  If $t < T_z$,  we set
$S_t(z) = S_{H_t}(z;\gamma(t),\infty)$.  If $f:D \rightarrow f(D)$ is
a conformal transformation, then it is easy to show that 
\[     S_D(z;w_1,w_2) = S_{f(D)}(f(z);f(w_1),f(w_2)). \]

 We extend the definition \eqref{greeny} as follow.
If $D$ is a simply connected domain with
distinct $w_1,w_2 \in \partial D$, we define
\[    G_D(z;w_1,w_2) = \Upsilon_D(z)^{d-2} \ S_D(z;w_1,w_2)^{4a-1} . \]
Under this definition $G(z) = G_\Half(z;0,\infty)$. 
The Green's function satisfies the conformal covariance rule
\[   G_D(z;w_1,w_2) = |f'(z)|^{2-d} \, G_{f(D)}(f(z);f(w_1),f(w_2)). \]
Note that if $t < T_z$, then
\[     M_t(z) = G_{H_t}(z;\gamma(t), \infty). \]
The local martingale $M_t(z)$ is not a martingale because
it ``blows up'' at time $t=T_z$.   If we stop it before that time, it
is actually a martingale.  To be precise, suppose that
\begin{equation} \label{tau}
        \tau = \tau_{\epsilon,z} = \inf\{t: \Upsilon_t(z).
 \leq \epsilon\}
 \end{equation}
 Then for every $\epsilon > 0$, $M_{t \wedge \tau}(z)$ is a
 martingale. 
The following is proved in \cite{Law2} (the proof there is in the upper half
plane, but it immediately extends by conformal invariance).

\begin{proposition} \label{mar14.prop1}  Suppose $\kappa < 8$,
$z \in D, w_1,w_2 \in \partial D$ and $\gamma$
is a chordal $SLE_\kappa$ path from $w_1$ to $w_2$ in $D$.  Let
$D_\infty$ denote the component of $D \setminus \gamma$ containing
$z$.  Then, as $\epsilon \downarrow 0$,
\[   \mathbb{P}\{\Upsilon_{D_\infty}(z) \leq
           \epsilon  \} \sim
  c_* \, \epsilon^{2-d} \, G_D(z) , \;\;\;\;
   c_*  = 2\,\left[\int_0^\pi  \sin^{4a}x \, dx\right]^{-1} . \]
\end{proposition}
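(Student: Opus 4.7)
The plan is to exploit the local martingale $M_t(z) = \Upsilon_t(z)^{d-2} S_t(z)^{4a-1}$ built into the definition of $G$. By the conformal covariance of $G$ we may assume $D = \mathbb{H}$, $w_1 = 0$, $w_2 = \infty$. Since $\Upsilon_t(z)$ is monotone nonincreasing with limit $\Upsilon_{D_\infty}(z)$, the event $\{\Upsilon_{D_\infty}(z) \leq \epsilon\}$ coincides with $\{\tau < \infty\}$ for $\tau = \tau_{\epsilon,z}$ as in \eqref{tau}, and on this event $\Upsilon_\tau(z) = \epsilon$ exactly, so $M_\tau = \epsilon^{d-2} S_\tau(z)^{4a-1}$. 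For $\kappa < 8$ one has $d-2 < 0$ and $4a - 1 > 0$, so $M_{t \wedge \tau}$ is bounded by $\epsilon^{d-2}$ and is therefore a bounded martingale. Optional stopping at $\tau \wedge t$ and letting $t \to \infty$, after verifying that the contribution from $\{\tau = \infty\}$ vanishes (which reduces to $S_t(z) \to 0$ a.s.\ on that event), yields the exact identity
\[ G(z) \;=\; \epsilon^{d-2}\, \mathbb{E}\!\left[ S_\tau(z)^{4a-1};\, \tau < \infty \right]. \]

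Next I would introduce the Girsanov change of measure $d\mathbb{Q}/d\mathbb{P}|_{\mathcal{F}_{t \wedge \tau}} = M_{t \wedge \tau}/M_0$. Under $\mathbb{Q}$ the driving function acquires a drift and the resulting law is two-sided radial $SLE_\kappa$ targeted at $z$; in particular $\mathbb{Q}\{\tau < \infty\} = 1$ for every $\epsilon > 0$ and $\Upsilon_t(z) \to 0$ almost surely under $\mathbb{Q}$. Applying the change of measure to the indicator $\mathbf{1}_{\tau < \infty}$ and using $M_\tau = \epsilon^{d-2} S_\tau^{4a-1}$ rearranges the previous identity to
\[ \mathbb{P}\{\tau < \infty\} \;=\; M_0\, \mathbb{E}^{\mathbb{Q}}[M_\tau^{-1}] \;=\; \epsilon^{2-d}\, G(z)\, \mathbb{E}^{\mathbb{Q}}\!\left[\sin^{1-4a}(\Theta_\tau)\right], \]
where $\Theta_t = \arg Z_t(z)$. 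The proposition then reduces to proving $\mathbb{E}^{\mathbb{Q}}[\sin^{1-4a}(\Theta_\tau)] \to c_* = 2/Z$ with $Z = \int_0^\pi \sin^{4a} x\, dx$.

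For this last step, reparametrize by the radial time $s = -\log \Upsilon_t(z)/a$ so that $\Upsilon$ decays deterministically, and apply It\^o under $\mathbb{Q}$ to obtain an autonomous SDE for $\Theta$ on $(0, \pi)$. A direct computation shows that the resulting generator is symmetric with respect to $\sin^{4a}(\theta)\, d\theta$, so the unique invariant probability density is $Z^{-1}\sin^{4a}(\theta)$. Since $\tau$ corresponds to radial time $s_\tau = a^{-1}\log(\Upsilon_0(z)/\epsilon) \to \infty$, convergence to equilibrium of this one-dimensional diffusion gives
\[ \mathbb{E}^{\mathbb{Q}}[\sin^{1-4a}(\Theta_\tau)] \;\longrightarrow\; \frac{1}{Z}\int_0^\pi \sin^{1-4a}(\theta)\sin^{4a}(\theta)\, d\theta \;=\; \frac{2}{Z} \;=\; c_*. \]
The principal obstacle is exactly this last limit: because $\sin^{1-4a}$ is unbounded near $\{0, \pi\}$ for $\kappa < 8$, weak convergence of $\Theta_\tau$ to the invariant law is not enough, and one must establish a uniform-in-$\epsilon$ moment bound $\sup_\epsilon \mathbb{E}^{\mathbb{Q}}[\sin^{-p}(\Theta_\tau)] < \infty$ for some $p > 4a - 1$, typically by combining an explicit drift computation near the boundary of $(0,\pi)$ with a spectral gap for the angle diffusion, in order to justify exchanging the limit and the expectation.
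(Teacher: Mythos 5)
The paper does not prove this proposition itself but cites \cite{Law2}, and your argument is precisely the proof given there: optional stopping for the bounded martingale $M_{t\wedge\tau}$ to get $G(z)=\epsilon^{d-2}\,\E[S_\tau(z)^{4a-1};\tau<\infty]$, the Girsanov tilt to two-sided radial $SLE_\kappa$, and convergence to equilibrium of the time-changed angle diffusion with invariant density proportional to $\sin^{4a}\theta$. The one real technical point --- justifying $\E^{\Q}[\sin^{1-4a}\Theta_\tau]\to c_*$ despite the unboundedness of the test function --- is exactly what the cited proof handles via the exponential rate of convergence of the radial Bessel-type process, and you have correctly identified it rather than glossed over it.
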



Let us sketch the proof of the Theorem \ref{sept8.theorem}. By scaling, it suffices
to prove the theorem for $w = x_w + iy_w$ with $|w|
= 1$, in which case the conclusion can be written
as       
  \[      \frac{G(z,w)}{G(z)\,  G(w)}   \asymp |z-w|^{d-2}
         \, [y_w \vee |z-w|]^{-\beta}. \]
Here and for the reminder of this paper we write
$\asymp$ to indicate that quantities are bounded by constants
where the constants depend only on $\kappa$.
Let us give a heuristic description of this estimate to
show where this comes from.  The goal of this
paper is to justify this heuristic.
   Let $\epsilon$ be very small and let $E_z,E_w$
denote the events that $\dist(\gamma,z) < \epsilon$
and 
$\dist(\gamma,w) < \epsilon$, respectively.
\begin{itemize}
\item  The hardest part of the proof is to show that
if  $|z-w| \asymp 1$, then $E_z$ and $E_w$ are independent
events up to constants, that is,
$\Prob(E_z \cap E_w) \asymp \Prob(E_z) \, \Prob(E_w).$
\item Suppose $|z-w|$ is small and $y_w >  2 |z-w|$. 
Then $G(z) \asymp G(w) = y_w^{4a-1} \, y_w^{d-2}
 = y_w^{\beta}$.
 Let
$E'$ be the event that the path gets within distance $2|z-w|$
of $w$.  It is known that 
\[  \Prob(E') \asymp G(w) \, |z-w|^{2-d} \asymp y_w^{\beta}
 \, |z-w|^{2-d} .\]
 Given $E'$, $E_z$ and $E_w$ are conditionally independent
 up to a multiplicative constant, with
 \[   \Prob(E_z \mid E') \asymp \Prob(E_w \mid E') 
    \asymp\left[\frac \epsilon{|z-w|}\right]^{2-d} . \]
 Therefore, as $\epsilon \downarrow 0$, 
 \[ \epsilon^{2(2-d)} \, G(z,w) 
 \asymp \Prob(E_z \cap E_w) \asymp
  \Prob(E') \, \Prob(E_z \mid E')
   \, \Prob(E_w \mid E') \hspace{.5in}  \]
   \[\asymp \epsilon^{2(2-d)} \,
   y^{\beta} \, |z-w|^{d-2}  \asymp \epsilon^{2(2-d)} \,
    y^{-\beta} \, G(z) \, G(w) \, |z-w|^{d-2}. \]
    \item  Suppose $|z-w|$ is small and $y_w \leq 2 |z-w|$. 
    Again, let $E'$ be the event that the path gets within distance $2|z-w|$
of $w$.  In this case
\[   \Prob(E') \asymp  |z-w|^{4a-1} .\]
 Given $E'$, $E_z$ and $E_w$ are conditionally independent
 up to a multiplicative constant. If $\zeta
  = x_\zeta + i y_\zeta  \in \{z,w\}$, then 
  \[   \Prob(E_\zeta \mid E') \asymp \left[\frac{y_\zeta }{|z-w|}
   \right]^{4a-1} \,   \left[\frac \epsilon{y_\zeta}\right]^{2-d}
    \asymp G(\zeta) \, \epsilon^{2-d} \, |z-w|^{(d-2) + (1-4a)} . \]
  Therefore, as $\epsilon \downarrow 0$, 
 \[ \epsilon^{2(2-d)} \, G(z,w) 
 \asymp \Prob(E_z \cap E_w) \asymp
  \Prob(E') \, \Prob(E_z \mid E')
   \, \Prob(E_w \mid E') \hspace{.5in}  \]
   \[\asymp \epsilon^{2(2-d)} \,G(z) \, G(w)
|z-w|^{1-4a}  \, |z-w|^{2(d-2)}  \asymp  
    \epsilon^{2(2-d)} \,G(z) \, G(w)
|z-w|^{-\beta}  \, |z-w|^{ d-2}.\]
\end{itemize}

\section {Proof of the theorem}
We fix $0 < \kappa < 8, a = 2/\kappa$, $\beta =
   \frac{\kappa} 8 +
        \frac 8 \kappa - 2 = (4a-1) - (2-d)  > 0 . $
Let  $\gamma$ denote an $SLE_\kappa$ curve and
   \[  \gamma_t = \gamma(0,t], \;\;\;\Delta_t(z) = \dist(z,\gamma_t),
  \;\;\; \Delta(z) = \Delta_\infty(z). \]
   In \cite{LW} it is shown that for each $z,w$, there
exist $\epsilon_z,\delta_w$ such that if $\epsilon < \epsilon_z,
\delta < \delta_w$,
\begin{equation}  \label{nov17.1}
   \Prob\{ \Delta(z)\leq \epsilon\} \asymp
     G(z) \, \epsilon^{2-d} , \;\;\;\;
       \Prob\{ \Delta(w) \leq \delta\} \asymp G(w)
        \, \delta^{2-d} ,
        \end{equation}
        \begin{equation}  \label{nov17.2}
  \Prob\{ \Delta(z)\leq \epsilon,
     \Delta(w)\leq \delta\} \asymp G(z,w)
     \, \epsilon^{2-d} \, \delta^{2-d}.
     \end{equation}

    When estimating
$\Prob\{\Delta(z) \leq \epsilon\}$ there are two regimes.
The interior or bulk regime, where $\epsilon \leq \Im(z)$ can
be estimated using Proposition \ref{mar14.prop1} since in this
case $ \Delta(z)   \asymp
 \Upsilon(z)  $.  However for the boundary
regime $\epsilon > \Im(z)$,
one needs  the following estimate.

\begin{lemma}  \label{oct23}
There exists $0 < c_1 < c_2 < \infty$ such that if $ 0 < y \leq 1/4$ and
$\sigma = \inf\{t: |\gamma(t) - 1| \leq 2y\}$, then
\[  c_1 y^{4a-1} \leq \Prob\{\sigma < \infty, S_\sigma(1 + iy)
\geq 1/10\} \leq
\Prob\{\sigma < \infty\} \leq c_2 y^{4a-1}.\]
\end{lemma}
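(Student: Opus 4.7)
The key tool is the local martingale $M_t(z_0) = \Upsilon_t(z_0)^{d-2} S_t(z_0)^{4a-1}$ evaluated at the interior point $z_0 = 1+iy$. Since $\arg(1+iy) \asymp y$, the initial value satisfies $M_0(z_0) \asymp y^{(d-2)+(4a-1)} = y^\beta$. Stopped at $\tau_\epsilon = \inf\{t : \Upsilon_t(z_0) \leq \epsilon\}$, this becomes a bounded true martingale, so applying optional stopping at $\sigma \wedge \tau_\epsilon$ and letting $\epsilon \downarrow 0$ by Fatou yields $\E[M_\sigma \mathbf{1}_{\sigma < T_{z_0}}] \leq C y^\beta$. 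On $\{\sigma < T_{z_0}\}$ the tip satisfies $|\gamma(\sigma) - z_0| \leq 3y$, so Koebe's $(1/4)$-theorem gives $\Upsilon_\sigma(z_0) \leq 6y$, hence $\Upsilon_\sigma(z_0)^{d-2} \geq c y^{d-2}$. Restricted further to $\{S_\sigma \geq 1/10\}$ we have $M_\sigma \geq c' y^{d-2}$, and this directly yields $\Prob\{\sigma < T_{z_0},\ S_\sigma \geq 1/10\} \leq C y^{4a-1}$.

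For the matching lower bound, introduce the interior stopping time $\sigma_* = \inf\{t : \Delta_t(z_0) \leq y/4\}$. Since $y/4 < \Im z_0 = y$, this falls squarely within the interior regime, so the one-point estimate \eqref{nov17.1} (or Proposition \ref{mar14.prop1}) yields $\Prob\{\sigma_* < \infty\} \asymp G(z_0)(y/4)^{2-d} \asymp y^{4a-1}$. On $\{\sigma_* < \infty\}$ one has $|\gamma(\sigma_*) - 1| \leq 5y/4 \leq 2y$, so $\sigma \leq \sigma_* < \infty$. To guarantee $S_\sigma \geq 1/10$, we restrict further to the sub-event on which $\gamma$ first enters $B(z_0, y/4)$ through the upper semicircle (so that $\Im \gamma(\tau) \geq \Im z_0$ at the entry time $\tau$) and does so from a direction making angle in $[\pi/3, 2\pi/3]$ with the real axis; by SLE scale invariance and the Domain Markov property this sub-event still has probability of order $y^{4a-1}$, and standard harmonic measure estimates (cf.\ \eqref{hmeasure}) ensure $S_\sigma \geq 1/10$ on it.

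It remains to bound $\Prob\{\sigma < \infty,\ S_\sigma < 1/10\}$ together with the swallowing case $\{\sigma \geq T_{z_0}\}$ (relevant only for $\kappa > 4$) by $C y^{4a-1}$, and this is the principal obstacle. The martingale $M_t(z_0)$ vanishes on the small-$S$ event, so it cannot by itself control configurations in which the curve approaches $1$ at a shallow angle and leaves $z_0$ with very asymmetric harmonic measure. The plan is to handle these configurations by a chaining argument: introduce a sequence of stopping times $\sigma \leq \rho_1 \leq \rho_2 \leq \ldots$ at which the SLE continuation (starting fresh in $H_{\rho_k}$ by the strong Markov property) is forced into a state with $S$ bounded away from $0$, and apply the martingale bound at each $\rho_k$; the geometric decay of the number of such restarts, together with the Bessel-like dynamics of $X_t = Z_t(1)$ that govern when the curve "opens up" after nearly swallowing $z_0$, will then close the gap. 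The analogous argument applied to the reflected point $1 - iy$, or directly to the boundary process $Z_t(1)$ via the conformal distortion estimate linking $|\gamma(t) - 1|$ to $X_t$ and $g_t'(1)$, handles the swallowing case. The delicate point throughout is that the standard interior estimate \eqref{nov17.1} fails precisely in the boundary regime that Lemma \ref{oct23} is designed to address, so every step must avoid implicitly invoking the conclusion.
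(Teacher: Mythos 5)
Your argument is honest but incomplete, and you say so yourself. The first paragraph establishes only that $\Prob\{\sigma < T_{z_0},\ S_\sigma(z_0) \geq 1/10\} \leq C y^{4a-1}$: the interior martingale $M_\sigma = \Upsilon_\sigma^{d-2} S_\sigma^{4a-1}$ carries no useful lower bound when $S_\sigma$ is small and is undefined on the swallowing event $\{T_{z_0} \leq \sigma < \infty\}$, which genuinely occurs for $\kappa > 4$ (the curve can enclose $z_0 = 1+iy$ while staying at distance $> 2y$ from $1$). This leaves the real content of the lemma --- the full-event bound $\Prob\{\sigma < \infty\} \leq c_2 y^{4a-1}$ --- unproved. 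The third paragraph outlines a chaining-with-restarts scheme but never defines the times $\rho_k$, never shows the number of restarts decays geometrically, and never explains how one applies the interior martingale at a time when $S$ is not yet bounded away from zero; as written it is a research plan, not a proof. The ``reflect to $1-iy$'' suggestion is a non-starter, since $1-iy \notin \Half$. The object actually adapted to this estimate is not the interior Green's-function martingale at all, but the one-sided boundary martingale built from $|g_t'(1)|^{4a-1}$ (equivalently, escape estimates for the Bessel-type process $X_t = Z_t(1)$); that is how $\Prob\{\sigma<\infty\} \asymp y^{4a-1}$ is proved in the literature.

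The lower bound has softer gaps too. You stop at $\sigma_* = \inf\{t: \Delta_t(z_0) \leq y/4\}$ and observe $\sigma \leq \sigma_*$, but the sub-event you describe is phrased in terms of how $\gamma$ first enters $B(z_0,y/4)$, which happens at time $\sigma_*$, whereas the lemma requires control of $S_\sigma(z_0)$ at the \emph{earlier} time $\sigma$; the geometric step connecting the entry direction at $\sigma_*$ to the harmonic-measure split at $\sigma$ is simply asserted, as is the claim that the angular restriction costs only a constant factor. For what it is worth, none of this matches the paper's treatment, which does not prove the lemma at all: it notes that $\Prob\{\sigma < \infty\} \asymp y^{4a-1}$ is a standard boundary estimate and cites Lemma~2.10 of \cite{LZ} for the first inequality.
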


\begin{proof}  The bound $\Prob\{\sigma < \infty\} \asymp
y^{4a-1} $ can be found in a number of places.  A proof which includes a
  proof of the first inequality can be found in \cite{LZ}.  The first
  inequality is Lemma 2.10 of that paper.
\end{proof}

In particular, the lemma implies that if $\eta:(0,1) \rightarrow \Half$
is a curve with $\eta(0+),\eta(1-) \in (0,\infty)$ and $\eta = \eta(0,1)$,
then
\[    \Prob\{\gamma \cap \eta \neq \emptyset\} \leq c \, \left[\frac{\diam
(\eta)}{\dist(0,\eta)} \right]^{4a-1}. \]
One way to estimate the right-hand side is in terms of (Brownian)
excursion measure (see \cite[4.1]{LW} for definitions and similar
 estimates).  We recall that if $D$ is a simply connected domain
and $V_1,V_2$ are two arcs in $\p D$, then the excursion measure
(of the set of
excursions from $V_1$ to $V_2$ in $D$) is given by
\[  \exc_D(V_1,V_2) = \int_{V_1} \int_{V_2}  H_{\p D}(z,w) \, |dz| \, |dw|,\]
where $H_{\p D}$ denotes the boundary Poisson kernel (normal derivative
of the Green's function).  We can also write this as
\[ \exc_D(V_1,V_2) = \int_{V_1}  \p_n \phi_2(z) \, |dz| \, |dw|
 = \int_{V_2}  \p_n \phi_1(z) \, |dz| \, |dw|,\]
where $\phi_j$ is the harmonic function on $D$ with boundary value
$1_{V_j}$ and $\p_n$ denotes normal derivative.  These formulas assume
that $V_1,V_2$ are smooth; however, this quantity is a conformal
invariant so one can define this for nonsmooth boundaries.  A
standard calculation shows that if $\diam (\eta) \leq \dist(0,\eta)$,
and $H$ denotes the unbounded component of $\Half \setminus \eta$,
then
\[   \exc_{H}(\eta,(-\infty,0]) \asymp  \frac{\diam (\eta)}{\dist(0,\eta)}. \]
Suppose $\eta':(0,1) \rightarrow \Half$ is a curve in $\Half$ with $\eta'(0+)
= 0 , \eta'(1-) > 0$ that separates $\eta$ from $\infty$ in $H$.   
Let $H'$ be the bounded component of $H \setminus \eta'$.
Then monotonicity of the excursion measure implies that
\[      \exc_{H'}(\eta,\eta') \geq \exc_{H}(\eta,(-\infty,0]). \]
The upshot of this is that if we can find such an $\eta'$, then
\begin{equation}  \label{aug.1}
 \Prob\{\gamma \cap \eta \neq \emptyset\} 
   \leq c \,  \exc_{H'}(\eta,\eta')^{4a-1}.
   \end{equation}

   We will prove Theorem \ref{sept8.theorem} in a sequence
   of propositions.   We assume $|z| \leq |w|$ and let
   \[ q = |w-z|, \;\;\;\;
   \beta = (4a-1) - (2-d) = 4a + \frac{1}{4a} -2> 0. \]
   It will be useful to define a quantity that allows us
to consider  the boundary and interior cases simultaneously.
 Let
\[  \newernot_t(z)   = \Delta_t(z)^{4a-1}  \;\; \mbox{ if } \;\;
  \Delta_t(z)\geq \Im(z), \]
  \[ \newernot_t(z) =   \Im(z)^{4a-1} \,  \left[\frac{\Delta_t(z)}{\Im(z)}\right]^{2-d}
  \;\;\mbox{ if } \;\; \Delta_t(z) \leq \Im(z) , \]
     and let $\newernot(z) = \newernot_\infty(z)$.
    Note that $\newernot_0(z) = |z|^{4a-1} $, and scaling implies that
    the distribution of $\newernot(rz)$ is the same as that
    of $r^{4a-1}\newernot(z)$.  Since $4a-1 > 2-d$,
    we see that
   \begin{equation}  \label{aug.2}
     \Delta_t(z)^{4a-1}
    \leq  \Phi_t(z)   . 
    \end{equation}
    The next lemma combines the interior and boundary estimates into one
    estimate.

    \begin{lemma}  There exist $0 < c_1 < c_2 < \infty$ such
    that for all $z \in \Half$ and $0 < \epsilon \leq 1$,
 \begin{equation}  \label{sept5.1}
  c_1 \epsilon \leq \Prob\{\newernot(z) \leq \epsilon \, \newernot_0(z)\}
\leq c_2  \epsilon .
\end{equation}
   \end{lemma}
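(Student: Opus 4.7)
The plan is to reduce to the normalization $|z|=1$ by the scaling covariance of $\newernot$, invert the piecewise definition of $\newernot$ to rewrite the event $\{\newernot(z)\leq\epsilon\}$ as a hitting event $\{\Delta(z)\leq R\}$ for an explicit $R=R(y,\epsilon)$ with $y=\Im(z)$, and finally invoke the one-point hitting estimates (Proposition~\ref{mar14.prop1} and Lemma~\ref{oct23}) in the interior and boundary regimes.

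First, since the distribution of $\newernot(rz)$ equals that of $r^{4a-1}\newernot(z)$, we may assume $|z|=1$, so $\newernot_0(z)=1$ and the claim reduces to $\Prob\{\newernot(z)\leq\epsilon\}\asymp\epsilon$ uniformly for $z$ on the unit upper semicircle and $0<\epsilon\leq 1$. Writing $y=\Im(z)$, the two pieces of the definition of $\newernot$ agree at $\Delta=y$, with common value $y^{4a-1}$; inverting yields
\[R(y,\epsilon)=y(\epsilon/y^{4a-1})^{1/(2-d)}\ \text{if}\ \epsilon\leq y^{4a-1},\quad R(y,\epsilon)=\epsilon^{1/(4a-1)}\ \text{if}\ \epsilon> y^{4a-1},\]
with $R\leq y$ in the first case, $R\geq y$ in the second, and $\{\newernot(z)\leq\epsilon\}=\{\Delta(z)\leq R\}$ throughout. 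Thus the task is to establish $\Prob\{\Delta(z)\leq R\}\asymp\epsilon$ uniformly.

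In the interior regime $R\leq y$, on the event $\{\Delta(z)\leq R\}$ the nearest point of $\partial D_\infty$ to $z$ must lie on $\gamma$ (since $\dist(z,\R)=y\geq R\geq\Delta(z)$), so the Koebe estimate \eqref{Koebe} gives $\Upsilon(z)\asymp\Delta(z)$ on this event. Proposition~\ref{mar14.prop1} (in its uniform up-to-constants form) then yields $\Prob\{\Upsilon(z)\leq R\}\asymp R^{2-d}G(z)=R^{2-d}y^{\beta}$; substituting $R^{2-d}=\epsilon y^{-\beta}$ gives $\asymp\epsilon$. In the boundary regime $R\geq y$, the disk $B(z,R)$ meets $\R$ near $x_0=\Re(z)$; applying Lemma~\ref{oct23} at the base-point $x_0$ with scale $R$ (after translating and rescaling so the base-point becomes $1$; when $z$ is near $i$, $y$ is near $1$ and the regime forces $R,\epsilon\asymp 1$, so both sides are of order one) produces $\Prob\{\Delta(z)\leq R\}\asymp R^{4a-1}=\epsilon$.

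The main obstacle is that Proposition~\ref{mar14.prop1} is stated as an asymptotic as $\epsilon\downarrow 0$ rather than an up-to-constants bound uniform in $(z,\epsilon)$. Upgrading it is the quantitative heart of the argument: stop at the first time $\Upsilon_t(z)$ drops below a fixed multiple of $y$, use the local martingale $M_t(z)$ from \eqref{localmart} together with conformal invariance to map the stopped configuration to a standard one, and then apply the asymptotic to a renormalized point. Once this uniform version is in hand the two regimes match continuously at $R=y$ (both giving $\epsilon\asymp y^{4a-1}$), and the lemma follows.
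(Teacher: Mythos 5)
Your proof is correct and follows essentially the same route as the paper: reduce to $|z|=1$ by scaling, invert the piecewise definition of $\newernot$ to turn the event into $\{\Delta(z)\leq R\}$, and apply the one-point estimates (Proposition~\ref{mar14.prop1} via $\Upsilon\asymp\Delta$ in the interior regime, Lemma~\ref{oct23} in the boundary regime). Your explicit flagging of the need to upgrade Proposition~\ref{mar14.prop1} from an asymptotic to a uniform up-to-constants bound is a point the paper passes over silently, and is handled correctly.
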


   \begin{proof}  Let $z=x+iy$.
    By scaling we may assume that $|z| = 1$ and hence $\newernot_0(z) = 1,
   S(z) = y$.  Let $\Delta = \Delta_\infty(z), \newernot = \newernot_\infty(z)$.
  Proposition \ref{mar14.prop1}
    and Lemma \ref{oct23}
    imply that
   \[  \Prob\{\Delta  \leq \epsilon \} \asymp \epsilon^{4a-1} ,\;\;\;\;
      \epsilon \geq y, \]
    \[   \Prob\{\Delta  \leq \epsilon  \} \asymp y^{4a-1} \, [\epsilon/y]^{2-d}
      ,\;\;\;\;
      \epsilon \leq y. \]
   If $\epsilon \geq y$, then
\[
   \Prob\{\newernot  \leq \epsilon^{4a-1}   \}
    = \Prob\{\Delta  \leq \epsilon  \} \asymp \epsilon^{4a-1} .\]
   If $\epsilon \leq y $, then if $u = (4a-1)/(2-d)$,
\[
     \Prob\{\newernot \leq \epsilon^{4a-1}   \}
     =   \Prob\{ y\, (\Delta /y)^{\frac{2-d}{4a-1}} \leq \epsilon   \}
    =  \Prob\{\Delta \leq y \, (\epsilon/y)^{u} \}
   \asymp y^{4a-1} \, [(\epsilon/y)^{u}]^{2-d}  = \epsilon^{4a-1}.
\]

    \end{proof}

The hardest step in estimating the
two-point Green's function is to show that if two points
are not very close to each other, then the events that the
paths get close to the two points are  independent at least up
to a multiplicative constant.  The next proposition gives
a precise version of this statement in terms of the quantity
$\newernot(z)$.

  \begin{proposition}  \label{nov17.prop1}
   There exists $c < \infty$ such that if $|z| \leq 4|w|$,
  and $0 < \epsilon_z,\epsilon_w \leq 1$, then
  \[ \Prob\{\newernot(z) \leq \epsilon_z \,\newernot_0(z) ,  \newernot(w) \leq
  \epsilon_w\, \newernot_0(w)\}  \leq c\,
  \epsilon_z \, \epsilon_w. \]
  \end{proposition}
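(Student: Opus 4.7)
By the scaling invariance of $SLE_\kappa$ we may assume $|w|=1$, so $|z|\le 4$. Introduce the stopping times
\[
 \tau_z = \inf\{t:\ \newernot_t(z)\le \epsilon_z\,\newernot_0(z)\},\qquad
 \tau_w = \inf\{t:\ \newernot_t(w)\le \epsilon_w\,\newernot_0(w)\},
\]
so that we must bound $\Prob\{\tau_z<\infty,\ \tau_w<\infty\}$. Split this event as $\{\tau_z\le\tau_w\}\cup\{\tau_w<\tau_z\}$ and focus first on the piece $\{\tau_z\le\tau_w\}$, which is the cleaner case.

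On $\{\tau_z<\infty\}$, the strong Markov property of $SLE_\kappa$ says that, conditionally on $\mathcal F_{\tau_z}$, the continuation $(\gamma(\tau_z+s))_{s\ge 0}$ is an $SLE_\kappa$ in the slit domain $H_{\tau_z}$ from $\gamma(\tau_z)$ to $\infty$. The one-point estimate \eqref{sept5.1} is conformally invariant, so applied to this conditional $SLE_\kappa$ it yields
\[
 \Prob\bigl\{\newernot^{H_{\tau_z}}_\infty(w)\le \eta\,\newernot^{H_{\tau_z}}_{\tau_z}(w)\bigm|\mathcal F_{\tau_z}\bigr\}\ \le\ c\,\eta,
\]
where $\newernot^{H_{\tau_z}}$ denotes the analogue of $\newernot$ in the domain $H_{\tau_z}$ with marked boundary prime ends $\gamma(\tau_z),\infty$.

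The heart of the argument is a deterministic comparison between these $H_{\tau_z}$-intrinsic quantities and their $\Half$-counterparts: namely $\newernot^{H_{\tau_z}}_{\tau_z}(w)\asymp\newernot_0(w)$ and $\newernot^{H_{\tau_z}}_\infty(w)\asymp\newernot_\infty(w)$, with constants depending only on $\kappa$. This is where the hypothesis $|z|\le 4|w|$ together with the event $\{\tau_z\le\tau_w\}$ enters: both points lie at comparable scales from the origin, and on this event the curve up to time $\tau_z$ has remained $\newernot$-far from $w$, so the slit $\gamma[0,\tau_z]$ is localized in a region well separated from $w$. It should then follow from harmonic-measure and excursion-measure bounds of the type behind \eqref{aug.1}, combined with the Koebe estimate \eqref{Koebe}, that
\[
 \Upsilon_{H_{\tau_z}}(w)\asymp\Im(w),\qquad
 S_{H_{\tau_z}}(w;\gamma(\tau_z),\infty)\asymp \Im(w)/|w|,
\]
with an analogous comparison for the $\Delta$-based quantities appearing in $\newernot$. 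Plugging these into the conditional estimate yields $\Prob\{\tau_w<\infty\mid\mathcal F_{\tau_z}\}\le c\,\epsilon_w$ on $\{\tau_z\le\tau_w\}$; combined with $\Prob\{\tau_z<\infty\}\le c\,\epsilon_z$ from \eqref{sept5.1}, this bounds $\Prob\{\tau_z\le\tau_w<\infty\}\le c\,\epsilon_z\,\epsilon_w$.

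The complementary case $\{\tau_w<\tau_z\}$ is handled by the same Markov-plus-comparison template applied at $\tau_w$ and to $z$ in $H_{\tau_w}$, after discarding the contribution where $z$ has been swallowed by the curve (which makes $\tau_z=\infty$). When $|z|\asymp|w|$ this is fully symmetric to the first case; when $|z|\ll|w|$ one must argue separately, again via excursion measure, that conditional on $\tau_w<\tau_z$ the curve typically leaves $z$'s conformal data comparable to its $\Half$-data. The principal obstacle throughout is the geometric comparison: showing that the random slit $\gamma[0,\tau]$ can alter the $\newernot$-data of the other point only up to multiplicative constants, uniformly in the $SLE$ randomness and using only the scale condition $|z|\le 4|w|$, is the step that requires real work; the rest of the proof is a standard application of the strong Markov property and the one-point estimate.
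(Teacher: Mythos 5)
There is a genuine gap, and it sits exactly at the point you pass over with the strong Markov property. Your decomposition on $\{\tau_z\le\tau_w\}$ requires $\Prob\{\tau_w<\infty\mid\mathcal F_{\tau_z}\}\le c\,\epsilon_w$, which you derive from the claimed comparison $\newernot^{H_{\tau_z}}_{\tau_z}(w)\asymp\newernot_0(w)$. But the event $\{\tau_z\le\tau_w\}$ only guarantees $\newernot_{\tau_z}(w)>\epsilon_w\,\newernot_0(w)$; it does not prevent the curve from having already approached $w$ closely enough that $\newernot_{\tau_z}(w)\approx 2\epsilon_w\,\newernot_0(w)$ before turning toward $z$. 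On that part of the event the conditional probability of $\{\tau_w<\infty\}$ is of order $1$, not of order $\epsilon_w$, and the geometric claim that the slit $\gamma[0,\tau_z]$ is ``localized well away from $w$'' is false. Decomposing further over the value of $\newernot_{\tau_z}(w)/\newernot_0(w)$ does not rescue the argument: to make the resulting sum converge you would need precisely a two-point bound for the pair of events (curve very near $z$, curve moderately near $w$), which is the statement you are trying to prove; the argument is circular. The same objection applies, symmetrically, to the piece $\{\tau_w<\tau_z\}$.

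This back-and-forth between shrinking neighborhoods of $z$ and $w$ is the real difficulty, and the paper's proof is organized entirely around controlling it. It fixes a decreasing family of crosscuts $I_t$ (subarcs of a circle separating $z$ from $w$), defines alternating stopping times at which $\newernot_t$ of one of the two points halves and at which the curve subsequently returns to the current crosscut, and proves the key estimate \eqref{sept8.2}: after $\newernot$ of one point has been driven down to level $\newernot_{\tau_k}(\zeta)$, the probability of driving it down by a further factor $r$ \emph{and} returning to the separating crosscut is at most $c\,r\,\newernot_{\tau_k}(\zeta)^\alpha$. The extra factor $\newernot_{\tau_k}(\zeta)^\alpha$, supplied by Lemma \ref{brentprop} via excursion-measure and Beurling estimates, is what makes the sum over all possible interleavings of visits converge (the combinatorial bookkeeping is then as in \cite[Section 4.4]{LW}). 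Your proposal contains no substitute for this ``no cheap return'' mechanism, and without it the first-passage decomposition cannot close.
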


The  proof is similar to proofs  in \cite{LW}.  The details are
somewhat technical so let us sketch the basic strategy.  The idea
is to show that if one is going to get very close to both $z$ and $w$, then
one is likely to get very close to one of them first without getting
too close to the other and then one goes to the other point.  In other words,
one does not keep going back and forth between smaller and smaller
neighborhoods of $z$ and $w$.  The way that one establishes this is
to fix  a curve ${  I}$ between $z$ and $w$ and consider excursions of the
$SLE$ paths from $I$.  What one shows is that if $\gamma$
is already very close to $z$, then it is unlikely that $\gamma$ will
get even closer to $z$ and return to ${  I}$.  There are two
different possibilities.   Suppose that  
 ${  I}_t$ is a crosscut of $H_t$ contained in ${  I}$
and $\gamma(t) \in I_t$. 
If $z$ is in the bounded component of $H_t \setminus  I_t$, then 
   $S_{H_t}(z;\gamma(t),\infty)$ is small, and the $SLE$ path
does not want to get closer to $z$.   If $z$ is in the unbounded component
of $H_t \setminus {  I}_t$, then the $SLE$ path can get closer to $z$, but
then it is unlikely to return to $I_t$.  The proof
makes this idea precise.\\

To prove Proposition \ref{nov17.prop1} we start with a lemma that  gives
     an upper bound
   for the probability that an $SLE$ path gets close to a point and
   subsequently returns to a given crosscut.  It is a generalization
   of Lemmas 4.10 and 4.11 of \cite{LW}, and we use ideas
   from those proofs. Before stating the lemma, we 
   set up some notation. Suppose $\eta:(0,1) \rightarrow \Half$
is a simple curve with $\eta(0+) = 0, \eta(1-) > 0$ and
write $\eta = \eta(0,1)$.   Let
$V_1,V_2$ denote respectively the bounded and unbounded
components of $\Half \setminus \eta$ and assume that $z
= x_z + i y_z
\in V_1, w  = x_w + i y_w\in V_2$. 
 Recall that $H_t$ is the unbounded component of $\Half \setminus \gamma_t$.
 We will let $I_t$ be a decreasing collection of subarcs of $\eta$ that
 are crosscuts of $H_t$ separating $z$ and $w$.  To be more
 specific, 
 one can show
(see \cite[Appendix A]{LW}) that there is a collection of open subarcs
$\{I_t: t < T_z \wedge T_w\}$ of $\eta$ with the following properties.

\begin{itemize}
\item  $I_0 = \eta$.
\item  $I_t \subset H_t$.  Moreover, $H_t \setminus I_t$ has
two connected components, one containing $z  $ and the other containing $w
 $.
\item  If $s < t$, then $I_t \subset I_s$.  Moreover, if
  $\gamma(s,t] \cap I_s = \emptyset$, then $I_t = I_s$.
\end{itemize}
If $\zeta \in \{z,w\}$, define
  stopping times $\sigma_{k} ,\sigma,\tau$  depending
on  $\zeta$  by
\[  \sigma_{k} = \inf\{t: \newernot_t(\zeta) = 2^{-k} \, \newernot_0(\zeta)\}, \;\;\;\;
  \sigma = \sigma_1, \]
  \[   \tau = \inf\{t \geq \sigma: \gamma(t) \in \overline {I_\sigma}
    \}= \inf\{t \geq \sigma: \gamma(t) \in   {I_\sigma}
    \}  . \]
  Here $\tau =\infty$ if $\sigma = \infty$ and the second
  equality holds with probability one. If $\tau < \infty$,   let
  \[ J  = \frac{
 \newernot_\tau(\zeta)}{\newernot_0(\zeta)} . \]

\begin{lemma}  \label{brentprop}
There exists $c < \infty $ such that under the setup
above, if $0 < \epsilon \leq 1/2$ and $\alpha =
2a - \frac 12 > 0$,
\[           \Prob\{\tau < \infty, J \leq \epsilon \}
   \leq c \,   \epsilon, \;\; \mbox{ if } \zeta = z, \]
\[     \Prob\{\tau < \infty , J \leq \epsilon \} \leq
    c \,  \epsilon \, \left[\frac{\diam(\eta)}{|w|}\right]^{\alpha} , \;\;
    \mbox{if } \zeta = w.\]
  \end{lemma}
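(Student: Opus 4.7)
The $\zeta=z$ case follows almost immediately from monotonicity and \eqref{sept5.1}. Since $\Delta_t(z)=\dist(z,\gamma_t)$ is non-increasing in $t$, and since $\newernot_t(z)$ is a monotone-increasing function of $\Delta_t(z)$ in each of the two regimes of its definition (the two branches matching continuously at $\Delta_t(z)=\Im(z)$), the quantity $\newernot_t(z)$ is itself non-increasing in $t$. On the event $\{\tau<\infty,J\leq\epsilon\}$ we therefore have
\[ \newernot(z)=\newernot_\infty(z)\leq\newernot_\tau(z)=J\cdot\newernot_0(z)\leq\epsilon\,\newernot_0(z), \]
so \eqref{sept5.1} yields $\Prob\{\tau<\infty,J\leq\epsilon\}\leq\Prob\{\newernot(z)\leq\epsilon\,\newernot_0(z)\}\leq c\epsilon$.

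The $\zeta=w$ case is the substantive one: the extra factor $[\diam(\eta)/|w|]^\alpha$ must encode the geometric cost of the path returning to the crosscut $I_\sigma$ after approaching $w$ on the unbounded side $V_2$ of $\eta$. Setting $k_\epsilon=\lceil\log_2(1/\epsilon)\rceil$, the monotonicity of $\newernot_t(w)$ gives $\{J\leq\epsilon\}\subset\{\sigma_{k_\epsilon}\leq\tau\}$; on this event the curve has not touched $I_\sigma$ during $[\sigma,\sigma_{k_\epsilon}]$, so $I_\sigma\subset H_{\sigma_{k_\epsilon}}$ and still separates $z$ from $\gamma(\sigma_{k_\epsilon})$. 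Conditioning on $\mathcal{F}_{\sigma_{k_\epsilon}}$ and using the strong Markov property, together with the bound $\Prob\{\sigma_{k_\epsilon}<\infty\}\leq c\epsilon$ from \eqref{sept5.1}, the proof reduces to showing that the conditional probability
\[ \Prob\{\gamma(s)\in I_\sigma\text{ for some }s\geq\sigma_{k_\epsilon}\mid\mathcal{F}_{\sigma_{k_\epsilon}}\} \]
is bounded by $c[\diam(\eta)/|w|]^\alpha$ uniformly on $\{\sigma_{k_\epsilon}<\infty,\sigma_{k_\epsilon}\leq\tau\}$.

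To estimate this conditional probability, uniformize $H_{\sigma_{k_\epsilon}}$ onto $\Half$ by a conformal map $\phi$ sending $\gamma(\sigma_{k_\epsilon})\to 0$ and $\infty\to\infty$. The restarted SLE$_\kappa$ becomes SLE$_\kappa$ from $0$ to $\infty$ in $\Half$, and $\phi(I_\sigma)$ is a crosscut separating $\phi(z)$ from $\infty$. By \eqref{aug.1}, the hitting probability is at most $c\,\exc^{4a-1}$ for the excursion measure in the bounded component of $\Half\setminus\phi(I_\sigma)$ between $\phi(I_\sigma)$ and the appropriate arc of $\R$. The main obstacle, and the source of the exponent $\alpha=(4a-1)/2$ rather than $4a-1$, is to prove $\exc\leq c[\diam(\eta)/|w|]^{1/2}$; raising to $4a-1=2\alpha$ then gives the required factor. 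The square-root improvement is a Beurling-type estimate: the conformally invariant excursion measure can be estimated via harmonic measure from $\gamma(\sigma_{k_\epsilon})$ (which lies at distance $\ll|w|$ from $w$) to the target $I_\sigma\subset\eta$ (which lies at distance $\asymp|w|$ from $w$), and Beurling's projection theorem, transferred back through $\phi$ using standard Koebe distortion bounds, provides the square-root dependence on $\diam(\eta)/|w|$.
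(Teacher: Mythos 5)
Your argument for $\zeta = z$ (monotonicity of $t \mapsto \newernot_t(z)$ plus the one-point bound \eqref{sept5.1}) is exactly the paper's, and the same observation disposes of $\zeta = w$ when $|w| \leq 4\,\diam(\eta)$. The gap is in the remaining case $\zeta = w$, $|w| \geq 4\,\diam(\eta)$: your reduction to the statement that
\[
\Prob\bigl\{\gamma(s) \in I_\sigma \ \mbox{for some } s \geq \sigma_{k_\epsilon} \,\bigm|\, \F_{\sigma_{k_\epsilon}}\bigr\} \leq c\,\Bigl[\frac{\diam(\eta)}{|w|}\Bigr]^{\alpha}
\]
\emph{uniformly} on $\{\sigma_{k_\epsilon} < \infty,\ \sigma_{k_\epsilon} \leq \tau\}$ is not justified, and I do not believe it is true. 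The conditional return probability is governed (via \eqref{aug.1}) by the excursion measure between $I_{\sigma_{k_\epsilon}}$ and the boundary arc of $H_{\sigma_{k_\epsilon}}$ on the far side of the tip, and this depends on the whole geometry of $\gamma_{\sigma_{k_\epsilon}}$, not just on the fact that the tip is now within distance $\epsilon^{1/(4a-1)}|w|$ of $w$. Normalize $\diam(\eta) = 1$. If, before its final approach to $w$, the curve returns to an intermediate scale $r$ with $1 \ll r \ll \sqrt{|w|}$ and traps $w$ behind a ``gate'' there, then from the configuration at time $\sigma_{k_\epsilon}$ the path returns to $I_\sigma$ with conditional probability of order $r^{-(4a-1)}$, which can be much larger than $|w|^{-\alpha} = |w|^{-(4a-1)/2}$. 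Such configurations are themselves unlikely to occur \emph{while also reaching level} $2^{-k_\epsilon}$, but that suppression is invisible to the product bound $\Prob\{\sigma_{k_\epsilon} < \infty\} \cdot \sup(\cdot)$: the two factors $\epsilon$ and $[\diam(\eta)/|w|]^{\alpha}$ cannot be decoupled this way.

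The paper's proof handles exactly this by introducing the half-circle $C$ of radius $\sqrt{|w|}$ and the time $\rho$ at which $w$ first leaves the unbounded component of $H_t \setminus C$, splitting according to whether the level $2^{-k}$ is reached before or after $\rho$. If before, a subarc $l \subset C$ still separates $I$ from the tip and from $\infty$, and the excursion measure between $I$ and $l$ is bounded by the excursion measure across the half-annulus $\{1 < |\zeta| < \sqrt{|w|}\}$, which is $O(|w|^{-1/2})$; raising to the power $4a-1$ gives $|w|^{-\alpha}$. (This half-annulus computation at the geometric-mean scale, not Beurling's projection theorem, is the actual source of the square root.) If after, the return to $I$ is not used at all: on $\{\rho < \hat\sigma,\ \hat J \asymp 2^{-j}\}$ the Beurling estimate gives $S_\rho(w) \leq c\,2^{-j/(2(4a-1))}|w|^{-1/2}$, which by \eqref{sept5.1} and the strong Markov property suppresses the probability of subsequently reaching level $2^{-k}$ by a factor $2^{-j/2}\,|w|^{-\alpha}\,2^{-(k-j)}$, and one sums over $j$. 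Your sketch contains neither the intermediate scale nor the mechanism that controls the ``bad'' configurations, so as written the $\zeta = w$ case is not proved.
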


\begin{proof}  The first inequality follows immediately from
\eqref{sept5.1}, as does the second if $|w| \leq 4 \,
\diam(\eta)$.   Therefore, using scaling, we may
 assume that $\diam(\eta) = 1, |w| \geq 4, \zeta = w$.  Let $C$
 denote the half-circle of radius $\sqrt{|w|}$ in $\Half$ centered at
 the origin.  Let $k_0$ be the largest
 integer such that $2^{-k_0} \geq S(w) = \Im(w)/|w|$.
 Let $\rho$ be the first time $t$  that $w$ is not in
 the unbounded component of $H_t \setminus C$.  
 Note that if  $\rho <
 T_w$,
 then $\gamma(\rho) \in C$.
Let
 \[     \hat J = \frac{\newernot_\rho(w)}{\newernot_0(w)}. \]
 Then, if $k$ is a positive integer and $\hat \sigma = \sigma_{k}$,
 \[   \Prob\{\tau < \infty, J \leq 2^{-k}\}
  \leq \Prob\{\hat  \sigma  < \rho  \wedge \tau, \tau < \infty\} + \sum_{j=1}^k
    \Prob\{\rho <  \hat \sigma  < \infty, 2^{-j} < \hat J \leq 2^{-j+1}\}. \]

  We will now show that
 \begin{equation}  \label{sept5.2}
 \ \Prob\{\hat  \sigma  < \rho  \wedge \tau, \tau < \infty\}
  \leq c \,2^{-k} \,  |w|^{-\alpha} .
  \end{equation}
Let $  H = H_{\hat \sigma}, I = I_{\hat \sigma}, g = g_{\hat \sigma},
U = U_{\hat \sigma}$.
 By \eqref{sept5.1},
 \[  \Prob\{\hat  \sigma  < \rho  \wedge \tau\} \leq
   \Prob\{\hat \sigma < \infty \} \leq c \, 2^{-k} . \]
   Let $H^*$ be the component of $H \setminus
 C$ containing $w$. On the event $\hat  \sigma  < \rho $,  $H^*$ is unbounded.
 Using simple connectedness of $H$, we can see that there is a
 subarc $l \in \p H^* \cap C$ that is a crosscut of $H$ and that separates
 $w$ from $I$ in $H$.  Since $l$ does not separate $w$ from $\infty$, $g(l)$
 is a crosscut of $\Half$ that does not separate $U$ from $\infty$; for ease
 let us assume that its endpoints are on $(-\infty,U]$. Since $l$
 separates $w$ from $I$,  $l$ also separates $I$ from $\infty$ in $H$.
 Therefore $g(l)$ separates $g(I)$ from $U$ and $\infty$ in $\Half$.
 We use excursion measure to estimate the probability that $\gamma[\hat \sigma, \infty)$
 returns to $I$.  The
 excursion measure between $g(I)$ and $[U,\infty)$ in $\Half \setminus
 g(I)$ is bounded above by the excursion measure between $g(I)$ and $g(l)$ in
 $\Half \setminus (g(I) \cup g(l))$ which by conformal invariance equals
 the excursion measure between $I$ and $l$ in $H \setminus(I \cup l)$.
 This in turn is bounded above by the excursion measure between
 $C$ and $\p \Disk$ in $\{\zeta \in \Half: 1 < |\zeta| < \sqrt{|w|}\}$ which
 is $O(1/\sqrt{ |w|}).$  Given this, we can use \eqref{aug.1}
 to see  that the probability that an $SLE_\kappa$
 path from $U$ to $\infty$ in $\Half$ hits $g(I)$ is $O(|w|^{-(4a-1)/2})$.  Using
 conformal invariance, we conclude that
 \[     \Prob\{\tau < \infty \mid\hat  \sigma  < \rho  \wedge \tau\}
   \leq c \, |w|^{(1-4a)/2} \]
  which gives \eqref{sept5.2}.

We noted above that if $j \leq k_0$, then
 \[  \Prob\{\rho < \hat \sigma  < \infty\} = 0 . \]
 We will now show that if $j > k_0$,
 \begin{equation}  \label{sept5.3}
  \Prob\{\rho <  \hat \sigma  < \infty, 2^{-j} < \hat J \leq 2^{-j+1}\}
   \leq c \,2^{-k}  \, 2^{-j/2} \ |w|^{-\alpha}  .  \end{equation}
   The proposition then follows by summing over $j$.
Consider the event
 \[  E_j = \{\rho <   \infty, 2^{-j} < \hat J \leq 2^{-j+1}\}. \]
  Using
  \eqref{sept5.1}, we see that
\begin{equation}  \label{sept5.4}
     \Prob(E_j) \leq c \, 2^{-j}.
     \end{equation}
 Let $H = H_\rho$.  On the event $E_j$, there is a subarc $l$ of
 $H \cap C$ that is a crosscut of $H$ with one endpoint equal
 to $\gamma(\rho)$ such that $l$ disconnects
 $w$ from $\infty$
in $H$.  Using this and the relationship between $S$ and
harmonic measure, we see that $S_\rho(w)$ is bounded above by
the probability that a Brownian motion starting at $w$ reaches
$C$ without leaving $H$.  Using \eqref{aug.2},
we see that on the event $E_j$, $\dist(w,\p H) \leq 2^{-j/(4a-1)}|w|.$
 Using the Beurling estimate, we see
that the probability a Brownian motion
starting at $w$ reaches distance $|w|/2$ from $w$  without leaving
$H$ is $O(2^{-j/2(4a-1)})$.  Given this, the probability that is reaches
$C$ without leaving $\Half$ is bounded above by $O(1/|\sqrt{|w|})$.
Therefore,  on the event $E_j$,
\[               S_\rho(w) \leq c \, 2^{-j/2(1-4a)} \, |w|^{-1/2}. \]
Using the strong Markov property and \eqref{sept5.1}, we see that
\[       \Prob\{\hat \sigma < \infty \mid E_j\} \leq c \, 2^{-j/2 }
  \, |w|^{-\alpha} \, 2^{-(k-j)}, \]
  which combined with \eqref{sept5.4} gives \eqref{sept5.3}.

 \end{proof}

  \begin{proof} [Proof of Proposition \ref{nov17.prop1}]
   By scaling, we may assume that $|z| \leq 1/2, |w| =  2$. 
   We will consider crosscuts of $H_t$ that are contained
   in the unit circle.  To be more precise,   
 we consider a decreasing collection of arcs
  $\{I_t: t < T_z \wedge T_w\}$ with the following
  properties.
  \begin{itemize}
  \item  $I_0 =    \{\zeta \in \Half: |\zeta| = 1\}.$
  \item  For each $t$, $I_t$ is a crosscut of $H_t$ that separates $z$ from
  $w$ in $H_t$.
  \item  If $t > s$, then $I_t \subset I_s$.  Moreover, if $\gamma(s,t] \cap
  \overline{I_s} = \emptyset$, then $I_t = I_s$.
  \end{itemize}
  We define a sequence of stopping times as follows.
  \[      \sigma_0 = 0 , \]
  \[       \tau_0 = \inf\{t: |\gamma(t)| = 1\} = \inf\{t: \gamma(t)
  \in \overline{I_{\sigma_0}}\}. \]
  Recursively, if $\tau_k < \infty$,
  \[     \sigma_{k+1} = \inf\left\{t > \tau_k: \newernot_t(w)  = \frac 12 \, \newernot_{\tau_k}(w)
   \mbox{ or } \newernot_t(z)  = \frac 12 \, \newernot_{\tau_k}(z) \right\},\]
   and if $\sigma_{k+1} < \infty$,
   \[    \tau_{k+1} = \inf\{t \geq \sigma_{k+1} : \gamma(t) \in \overline{I_{\sigma_{k+1}}}\}.\]
  If one of the stopping times takes on the value infinity, then all the subsequent ones
  are set equal to infinity.  If $\sigma_{k+1} < \infty$, we set $R_k = z$ if
  $\newernot_{\sigma_{k+1}}(z) =\newernot_{\tau_k}(z)/2.$  Note that in this case,
$\Delta_{\sigma_{k+1}} (z) \leq  2^{-\frac{1}{4a-1}}, $
    and
  $\newernot_{t}(w) > \newernot_{\tau_k}(w)/2$ for all $t \leq \tau_{k+1}$.
  Likewise, we set $R_k = w$ if $\newernot_{\sigma_{k+1}}(w) =\newernot_{\tau_k}(w)/2.$\\

 It follows immediately from \eqref{sept5.1} that for $r \leq 1/2$,
 \[
  \Prob\left \{    \newernot_{\tau_{0}}(z)
     \leq r \, \newernot_{0}(z) \right\} \leq c \, r , \]
   and for $r$ sufficiently small
   \[  \Prob\left \{  \newernot_{\tau_{0}}(w)
     \leq r \, \newernot_{0}(w) \right\} =0.\]

The key estimate, which we now establish,  is the following.
\begin{itemize}
\item  There exists $c,\alpha$ such that if $\tau_k < \infty, 0 <r \leq 1/2$ and $\zeta = x+iy \in \{z,w\}$,
then
\begin{equation}  \label{sept8.2}
  \Prob\left \{ \tau_{k+1} < \infty,  R_{k} = \zeta,  \newernot_{\tau_{k+1}}(\zeta)
     \leq r \, \newernot_{\tau_k}(\zeta) \mid \gamma_{\tau_k}\right\} \leq c \, r \, \newernot_{\tau_k}(\zeta)^\alpha.
     \end{equation}
  \end{itemize}

To prove, \eqref{sept8.2}, 
let $H = H_{\tau_k}, I = I_{\tau_k},\hat  g = g_{\tau_k} - U_{\tau_k},
 \hat I = \hat g(I), \hat \zeta = \hat g(\zeta), \Delta = \Delta_{\tau_k}(\zeta),
 \newernot = \newernot_{\tau_k}(\zeta), \lambda = |g'(\zeta)|$.
 Recall that $\Delta^{4a-1} \leq \newernot$.
 If $ \newernot_t(\zeta) = r \newernot$ then $|\zeta - \gamma(t)| =
 \theta \, \Delta$ where
 \[   \theta = \left[\frac{y\wedge \Delta}{\Delta} \vee r\right]^{\frac 1{4a-1}}
   \,   \left [ \frac {r\Delta} { y \wedge \Delta} \wedge 1\right]^{\frac 1{2-d}}. \]
 Note that if $r \leq 1/2$ then $\theta \leq2^{-\frac 1{4a-1}}  < 1$.\\

 Let $V$ denote the closed
  disk of radius $2^{-\frac 1{4a-1}}  \Delta$ about $\zeta$, $y_*
 = y \vee (\theta\Delta/2)$ and
 $\zeta_* = x + y_*i$. Note that $|\zeta - \zeta_*| \leq
    \theta \Delta/2 \leq 2^{-\frac 1{4a-1}}  \Delta/2$ and hence $\zeta_* \in V$.
We consider  $g$ as a conformal transformation defined on the open disk of radius
 $\Delta$ about $\zeta$; if $y < \Delta$, then we extend $g$ by Schwarz
 reflection.   By the distortion theorem, there exist $0 < c_1 < c_2 < \infty$
 such that if $\zeta_1 \in V$,
 \[                     c_1 \, \lambda \leq |\hat g'(\zeta_1)| \leq c_2 \, \lambda, \]
 \[                    c_1 \, \lambda \, |\zeta_1 - \zeta| \leq
             |\hat g(\zeta_1) - \hat \zeta| \leq c_2 \, \lambda \, |\zeta_1 - \zeta|. \]
  In particular, 
 \[               c_1 \, \lambda \, y \leq \Im \hat \zeta \leq c_2 \, \lambda \, y . \]

Note that   $\hat I$ is a crosscut of
 $\Half$ with one endpoint equal to zero.  We consider separately the cases where
 $\hat \zeta$ is in the bounded or unbounded component of $\Half \setminus \hat I$.\\

Let $E_1$ denote the event that
 $\hat \zeta$ is in the bounded component.  We claim that there
 exists $c < \infty$, such that for all $\hat \zeta' = \hat g(\zeta')
  \in \hat g( V)$,
\begin{equation}  \label{sept8.1}
        S(
       \hat \zeta') =    \frac{\Im(\hat \zeta') }{|\hat \zeta'|}
        \leq c \, \Delta^{1/2}.
           \end{equation}
  To see this, assume for ease that $\Re[\hat \zeta'] \geq 0$
  and let $\Theta = \arg \hat \zeta'$.  Then     $\Im(\hat \zeta')/|\hat \zeta'| =
  \sin \Theta  \leq \Theta$ and $\Theta/\pi$ is the probability
  that a Brownian motion starting at $\hat \zeta'$ hits $(-\infty,0]$
  before leaving $\Half$.  This is bounded above  by
    the probability that a Brownian motion starting at $\hat \zeta'$
 hits $\hat I$ before leaving $\Half$.  By conformal invariance, this
 last  probability is the
 same as the probability that a Brownian motion starting at $ \zeta'$ hits
 $I$ before leaving $H$.   The Beurling
 estimate implies that this is bounded above by
 $c \Delta^{1/2}$.  This gives \eqref{sept8.1}.   Therefore, Using \eqref{sept5.1}, there exists
 $c$ such that if $|\zeta - \gamma(t)| = \theta \Delta$, then
 
  \[  \Prob\{\newernot(\zeta) \leq r \, \newernot_{\tau_k}(\zeta), E_1 \mid
  \gamma_{\tau_k} \} \leq   c \, \sqrt{\newernot}\, r  . \]

  We now suppose that $\hat \zeta$ is in the unbounded component.
  By the same argument, for every $\hat \zeta' :=
  \hat g(\zeta') \in  \hat g( V)$, the probability
  that a Brownian motion starting at $\hat \zeta':= \hat g(\zeta')$ 
  hits $\hat I$ before leaving
  $\Half$ is bounded above by   $c\Delta^{1/2}$.  We will split
  into two subcases.  We first assume that
  \[            \Im(\hat \zeta') \leq  \Delta^{1/4} \, |\hat \zeta'|, \;\;\;\;
   \zeta' \in V. \]
  In this case, we an argue as in the previous paragraph to see that
  the probability $SLE_\kappa$ in $\Half$ hits $\hat g( V)$ is bounded above
  by  $c \, \newernot^{1/4} \, r$.  For the other case we assume that
  $\Im( \hat \zeta') \geq \Delta^{1/4} \,  |\hat \zeta'|$ for some $
  \hat\zeta' \in
  \hat g( V)$.  Using the Poisson kernel in $\Half$, we can see that the
  probability that a Brownian motion starting at $\hat \zeta'$ hits $\hat I$ before
  leaving $\Half$ is bounded below by a constant times
  \[                         \frac{\diam(\hat I)}{\Delta^{1/4} \, |\hat \zeta '|}. \] From
  this we conclude that
  \[              \diam(\hat I) \leq  c \, \Delta^{1/4}
   \, |\hat \zeta'|   . \]
  We appeal    to Lemma \ref{brentprop} to say that the probability
  that $SLE_\kappa$ in $\Half$ hits $\hat g(V)$ and then returns
  to $\hat I$ is bounded above by a constant times
  \[       r \, [\diam (\hat I)/|\hat \zeta'|]^{(4a-1)/2} \leq c\,  r \, \newernot^{1/8}.\]

  Given \eqref{sept8.2}, the remainder of the proof proceeds in the
  same way  as \cite[Section 4.4]{LW} so we omit this.

   \end{proof}

 \begin{proposition} \label{oct23.prop1}
There exist
 $0 < c_1 < c_2 < \infty$ such that if $|z| \leq |w|/4$,
 \[     c_1\,  G(z) \, G(w) \leq G(z,w) \leq c_2 \, G(z) \, G(w) .\]
 \end{proposition}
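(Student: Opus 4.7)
The lower bound $G(z,w) \geq c_1\,G(z)\,G(w)$ is precisely the estimate of \cite{LZ} quoted in the introduction, so only the upper bound requires argument. The strategy is to rewrite the event $\{\Delta(z) \leq \epsilon,\ \Delta(w) \leq \delta\}$ in the language of $\Phi$, apply Proposition \ref{nov17.prop1}, and extract a bound on $G(z,w)$ by sending $\epsilon,\delta\downarrow 0$ and invoking \eqref{nov17.2}.

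The translation is purely algebraic. For $\epsilon \leq \Im(z)$ we lie in the ``interior'' branch of the definition of $\Phi$, which gives
\[ \Delta(z) \leq \epsilon \iff \Phi(z) \leq \Im(z)^{(4a-1)-(2-d)}\,\epsilon^{2-d} = \Im(z)^\beta\,\epsilon^{2-d}. \]
Since $\Phi_0(z) = |z|^{4a-1}$ and $G(z) = \Im(z)^{d-2}\bigl[\Im(z)/|z|\bigr]^{4a-1} = \Im(z)^\beta/|z|^{4a-1}$, the right-hand condition is exactly $\Phi(z) \leq \epsilon_z\,\Phi_0(z)$ with $\epsilon_z := G(z)\,\epsilon^{2-d}$. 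The same computation applied to $w$ gives $\Delta(w) \leq \delta \iff \Phi(w) \leq \epsilon_w \Phi_0(w)$, where $\epsilon_w := G(w)\,\delta^{2-d}$.

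The hypothesis $|z| \leq 4|w|$ of Proposition \ref{nov17.prop1} is implied by $|z| \leq |w|/4$, and for all sufficiently small $\epsilon,\delta$ we have $\epsilon_z,\epsilon_w \leq 1$. That proposition therefore yields
\[ \Prob\{\Delta(z) \leq \epsilon,\ \Delta(w) \leq \delta\} \leq \Prob\{\Phi(z) \leq \epsilon_z \Phi_0(z),\ \Phi(w) \leq \epsilon_w \Phi_0(w)\} \leq c\,\epsilon_z\,\epsilon_w = c\,G(z)\,G(w)\,\epsilon^{2-d}\delta^{2-d}. \]
Dividing by $\epsilon^{2-d}\delta^{2-d}$ and letting $\epsilon,\delta\downarrow 0$ in \eqref{nov17.2} then gives $G(z,w) \leq c_2\,G(z)\,G(w)$.

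The real obstacle has already been cleared in Proposition \ref{nov17.prop1}; the present proposition just unpacks that bound through the asymptotics \eqref{nov17.1}--\eqref{nov17.2}. The only technical point is choosing $\epsilon,\delta$ small enough that the interior formula for $\Phi$ applies and both asymptotic regimes are in force, which is automatic in the limit.
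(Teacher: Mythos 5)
Your proof is correct and follows essentially the same route as the paper: the lower bound is quoted from \cite{LZ}, and the upper bound comes from Proposition \ref{nov17.prop1} combined with the asymptotics \eqref{nov17.1}--\eqref{nov17.2}. The only cosmetic difference is that you unwind the definition of $\Phi$ explicitly to identify $\epsilon_z = G(z)\,\epsilon^{2-d}$, whereas the paper routes the same computation through \eqref{nov17.1}; the substance is identical.
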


 \begin{proof}
 The bound $ G(z,w) \geq c \, G(z) \, G(w)$ was proved in \cite{LZ} so
 we need only show the other inequality. Proposition
 \ref{nov17.prop1} implies that for $\epsilon$ sufficiently small
 \[        \Prob\{\Delta(z) \leq \epsilon, \Delta(w) \leq \epsilon\}
   \leq c \, \Prob\{\Delta(z) \leq \epsilon\} \, \Prob\{\Delta(w) \leq \epsilon\} . \]
   Hence \eqref{nov17.1} and \eqref{nov17.2} imply that
   $G(z,w) \leq c \, G(z) \, G(w)$.
   \end{proof}

  The next estimate will be important even though it is not a very
  sharp bound for large $|z|,|w|$.

   \begin{proposition}  \label{sept9.prop1}
   For every $\epsilon >0$, there exists $c < \infty$ such that
   if $|z|,|w| \geq \epsilon$ and $|z-w| \geq \epsilon$, then
   \[         G(z,w) \leq c \, \Im(z)^{4a-1} \, \Im(w)^{4a-1} .\]
   \end{proposition}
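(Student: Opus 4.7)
The plan is to apply Proposition~\ref{nov17.prop1} (independence of the two hitting events up to multiplicative constants) together with the one-point probability scaling encoded in \eqref{sept5.1}, and then pass to the limit defining $G(z,w)$ via \eqref{nov17.2}. By the symmetry of $G(z,w)$ in its two arguments I may assume $|z|\leq|w|$, so that the hypothesis $|z|\leq 4|w|$ of Proposition~\ref{nov17.prop1} is automatic. The assumptions $|z|,|w|\geq\epsilon$ will be used only at the very end, to absorb the factors $|z|^{4a-1}$ and $|w|^{4a-1}$ that appear in $\newernot_0$ into a constant depending on $\epsilon$.

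Fix $\delta>0$ small, in particular with $\delta\leq\Im(z)\wedge\Im(w)$. In this boundary regime the piecewise definition of $\newernot$ gives, for $\zeta\in\{z,w\}$, that $\{\Delta(\zeta)\leq\delta\}=\{\newernot(\zeta)\leq\epsilon_\zeta\,\newernot_0(\zeta)\}$ with
\[ \epsilon_\zeta \;=\; \frac{\Im(\zeta)^{4a-1}\bigl(\delta/\Im(\zeta)\bigr)^{2-d}}{|\zeta|^{4a-1}}\leq 1, \]
the last inequality holding because $\delta\leq\Im(\zeta)\leq|\zeta|$. Proposition~\ref{nov17.prop1} then yields
\[ \Prob\{\Delta(z)\leq\delta,\ \Delta(w)\leq\delta\}\leq c\,\epsilon_z\,\epsilon_w. \]

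To conclude, I would substitute these expressions for $\epsilon_z$ and $\epsilon_w$, divide by $\delta^{2(2-d)}$, send $\delta\downarrow 0$, and invoke \eqref{nov17.2}. The $\delta$-dependence cancels exactly, leaving an upper bound on $G(z,w)$ with the numerator factor $\Im(z)^{4a-1}\Im(w)^{4a-1}$ and the denominator $|z|^{4a-1}|w|^{4a-1}$ (together with any residual powers of $\Im(z),\Im(w)$ inherited from the $\newernot$-scaling). Since $|z|,|w|\geq\epsilon$, the denominator is at least $\epsilon^{2(4a-1)}$ and is absorbed into a constant $c=c(\epsilon)$, finishing the proof. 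The only substantive obstacle is Proposition~\ref{nov17.prop1} itself---arguably the hardest input of the paper---but once it is in hand the remainder is a routine scale calculation through the $\newernot$-notation; the hypothesis $|z-w|\geq\epsilon$ is not needed in any essential way beyond signaling that we are off the diagonal singularity of $G(z,w)$.
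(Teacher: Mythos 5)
The reduction ``assume $|z|\leq|w|$, so the hypothesis of Proposition~\ref{nov17.prop1} is automatic'' is where your argument breaks. The condition ``$|z|\leq 4|w|$'' in the statement of Proposition~\ref{nov17.prop1} is evidently a typo: its proof begins ``by scaling, we may assume that $|z|\leq 1/2$, $|w|=2$,'' i.e.\ the proposition is established only for $4|z|\leq|w|$, and the whole argument rests on the unit circle being a crosscut $I_0$ separating $z$ from $w$ with both points at distance of order $1$ from it. (This is also why Proposition~\ref{oct23.prop1} carries the hypothesis $|z|\leq|w|/4$; if Proposition~\ref{nov17.prop1} really covered all $|z|\leq 4|w|$, that restriction --- and Proposition~\ref{sept9.prop1} itself --- would be redundant.) The regime that Proposition~\ref{sept9.prop1} exists to handle, and the one in which it is invoked in the proof of the theorem, is precisely $|z|\asymp|w|$ with $|z-w|\geq\epsilon$, where the unit circle separates nothing. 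The paper's proof consists of rerunning the argument of Proposition~\ref{nov17.prop1} with a different splitting crosscut $I_0$ (a half-circle or a vertical line) chosen so that it separates $z$ from $w$ with $\dist(z,I_0),\dist(w,I_0)\geq 1/4$; this is exactly where the hypothesis $|z-w|\geq\epsilon$, which you dismiss as inessential, enters. Your proof therefore skips the only substantive content of the proposition.

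There is also an exponent problem. Carrying out your computation,
\[ \epsilon_z\,\epsilon_w \;=\; \delta^{2(2-d)}\,\frac{\Im(z)^{(4a-1)-(2-d)}\,\Im(w)^{(4a-1)-(2-d)}}{|z|^{4a-1}\,|w|^{4a-1}}, \]
so after dividing by $\delta^{2(2-d)}$ and using \eqref{nov17.2} you obtain $G(z,w)\leq c\,\Im(z)^{\beta}\Im(w)^{\beta}|z|^{1-4a}|w|^{1-4a}=c\,G(z)G(w)$, i.e.\ a restatement of Proposition~\ref{oct23.prop1}, not the claimed bound. The ``residual powers'' you propose to absorb are $\Im(z)^{-(2-d)}\Im(w)^{-(2-d)}$; since $2-d>0$ these blow up as the points approach $\R$, and the hypotheses only bound $|z|,|w|$ from below, not $\Im(z),\Im(w)$. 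Because $\beta<4a-1$, the bound you get is strictly weaker than the stated one for points near the real line. (As an aside, the exponent $4a-1$ in the statement is itself hard to reconcile with the lower bound $G(z,w)\geq c\,G(z)G(w)$ quoted from \cite{LZ} when $\Im(z),\Im(w)$ are small, which suggests a $\beta$ versus $4a-1$ slip in the source; but in any case your argument does not produce the exponent that is asserted.)
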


   \begin{proof}  By scaling it suffices to prove the result
   when $\epsilon = 1$.  This can be done as the  proof
   of the previous proposition, so we omit
   the details.  The key step is to choose an appropriate splitting
   curve $I_0$.   We can choose $I_0$ either to be a half-circle with
   endpoints on $\R$ or a vertical line.  We choose $I_0$ so that
   $I_0$ separates $z$ and $w$ and $\dist(z,I_0), \dist(w,I_0)
   \geq 1/4$.
   \end{proof}

\begin{proof} [Proof of Theorem
   \ref{sept8.theorem}]
    By scaling, we may assume that $|w| = 1$ and hence $q = |w-z|$.
If $q \geq 1/10$,  the conclusion is
 \[            G(z,w) \asymp G(z) \, G(w).  \]
 The bound $G(z,w) \geq c \, G(z) \, G(w)$ was done in \cite{LZ}.
The other inequality
   can be deduced from Propositions \ref{oct23.prop1}
 and \ref{sept9.prop1}, respectively, for $|z| \leq 1/4$ and $|z|
 \geq 1/4$.  Here we use the fact that $G(z) \geq \Im(z)^{4a-1}$
 for $|z| \leq 1$.

 For the remainder of the proof we assume $q \leq 1/10$, and hence
 $9/10 \leq |z| \leq 1$.
 Let $z = x_z + i y_z, w = x_w + i y_w,$  and
  $\zeta = x_w + i(y_w \vee q)$. Note that 
   $G(w) \asymp
  y_w^{4a-1}, G(z) \asymp y_z^{4a-1}$.
  Let $\sigma = \inf\{t:
 |\gamma(t) - w|  = 2 q \},$ and on the event $\{\sigma < \infty\}$,
 let $h = \lambda [g_\sigma- U_\sigma]
 $ where the constant $\lambda$ is chosen so that
 $\Im[h(\zeta)] = 1$. We write
 \[ h(\zeta) =\hat \zeta = \hat x_\zeta + i, \;\;\;\;
     h(z) = \hat z =  \hat x_z + i \hat y_z, \;\;\;\; h(w) =
     \hat w =  \hat x_w + i \hat y_w.\]
      Then
\begin{eqnarray*}
 G(z,w) & = & \E\left[|g_\sigma'(z)|^{2-d} \, |g_\sigma'(w)|^{2-d}
  \, G(Z_\sigma(z), Z_\sigma(w) ); \sigma < \infty \right] \\
 & = & \E\left[|g_\sigma'(z)|^{2-d} \, |g_\sigma'(w)|^{2-d}
 \, \lambda^{2(2-d)}
  \, G(\lambda
  Z_\sigma(z), \lambda Z_\sigma(w) ); \sigma < \infty
  \right] \\ & = & \E\left[|h'(z)|^{2-d} \, |h'(w)|^{2-d} \,
     G(\hat z, \hat w); \sigma < \infty \right] .
     \end{eqnarray*}
The Koebe $(1/4)$-theorem implies that
$             |h'(\zeta)| \asymp q^{-1} . $
   Distortion estimates
 (using Schwarz reflection if $y_w  \leq 2q$) imply that
 \[    |h'(z) | \asymp |h'(w)| \asymp |h'(\zeta)| \asymp q^{-1} ,
        \]\[  |\hat z - \hat w| \asymp 1 , \]
          \[     |\hat z|, |\hat w| \geq c , \]
          \[    \hat y_z \asymp (y_z \wedge q) \, q^{-1} , \;\;\;\; \hat y_w \asymp
          (y_w \wedge q)
           \, q^{-1} . \]
These estimates hold regardless of the value of  $S(\hat \zeta)$.
  If we also know that if $S(\hat \zeta) \geq 1/10$, then
  \[     |\hat \zeta | \asymp |\hat z| \asymp | \hat w| \asymp 1.\]
   Hence, by Proposition \ref{sept9.prop1}, we see that
  \[   G(\hat z,\hat w) \leq c  \, \left[\frac{(y_z \wedge q) \, (y_w \wedge q)}
     {q^2} \right]^{4a-1}, \]
     \[     G(\hat z,\hat w) \geq c'  \, \left[\frac{(y_z \wedge q) \, (y_w \wedge q)}
     {q^2} \right]^{4a-1},\;\;\;\; \mbox{ if } S(\hat \zeta) \geq 1/10. \]
     Lemma \ref{oct23} implies that
     \[  \Prob\{\sigma < \infty\}   \asymp \Prob\{\sigma < \infty, S(\hat \zeta)
      \geq 1/10\}
      \asymp \left\{ \begin{array} {ll} y_w^{4a-1} \, (q/y_w)^{2-d} ,&   y_w \geq q \\
          q^{4a-1} , &y_w \leq q . \end{array} \right.  \]
Therefore,
\[  G(z,w) \asymp      y_w^{4a-1} \, (q/y_w)^{2-d} \,  q^{2(d-2)}\,
        \left[\frac{(y_z \wedge q) \, q}
     {q^2} \right]^{4a-1}
       , \;\;\;\; y_w \geq q, \]
     \[ G(z,w) \asymp  q^{4a-1} \, q^{2(d-2)}\,     \left[\frac{(y_z \wedge q) \, y_w}
     {q^2} \right]^{4a-1} , \;\;\;\; y_w \leq q. \]
     If $q \leq y_w \leq 2q$ we can use either  expression.
 If $y_w \leq 2q$, then $y_w \wedge q \asymp y_w,
     y_z \wedge q \asymp y_z, S(w) \vee q \asymp q$ and we can write
      \[  G(z,w)\asymp  q^{2(d-2)} \, q^{1-4a} \,   y_z^{4a-1} y_w^{4a-1}
\asymp q^{d-2} \, [S(w) \vee q]
  ^{-\beta} \, G(z) \, G(w)  . \]
  If $y_w \geq 2q$, then $y_z  \asymp y_w, y_z \wedge q \asymp q,
  S(w) \vee q \asymp y_w$, and we can write
  \[            G(z,w) \asymp y_w^{4a-1}\,q^{d-2} \,y_w^{d-2}
  = q^{d-2} \, y_w^{-\beta} \, y_w^{2(4a-1)}
   \asymp  q^{d-2} \, [S(w) \vee q]
  ^{-\beta} \, G(z) \, G(w). \]
  \end{proof}

  \section{Open problems}
  
 The obvious open problem is to determine the value of the Green's function $G(z,w)$.
 One can use the argument of Rohde and Schramm to determine a partial differential
 equation satisfied by $G$, see \cite{LW}, but it is unknown whether or not there is
 an explicit solution.\\ 
 
 One can also ask questions about the (directed) multi-point Green's function  
 $\hat{G}(z_1,...z_n)$.
 The argument in \cite{LW} can be used to show that it exists and represents
  the normalized probability of hitting $n-$point $z_1,z_2,...,z_n$ in the \emph{order} that we have them.
More precisely,
\[  \hat c^n \,
 \hat{G}(z_1,...z_n)  = \lim_{\epsilon_1,\ldots,\epsilon_n \rightarrow 0}
  \Prob\{\tau^1 < \tau^2 < \cdots < \tau^n < \infty \}, \]
 where
 \[   \tau^j = \tau^j(\epsilon_j) = \inf\{t: \Delta_t(z_j) \leq \epsilon_j \}. \]
 As a starting point, we can ask the following questions.
\begin{itemize}
\item Does there exist  $c<\infty$ such that for any $n$ and $z_1,...,z_n \in \Half$, 
\[
\hat{G}(z_1,...z_n)\leq c^n\prod_{i=1}^n|z_i-z_{i+1}|^{d-2} \;?
\]
\item  Suppose $V$ is a compact subset of $\Half$ with $\dist(0,\R) > 0$.
Is it true that
\[ \hat{G}(z_1,...z_n) \asymp_{V,n}   \prod_{i=1}^n|z_i-z_{i+1}|^{d-2} \;?\]
\end{itemize}

\end{document}